\documentclass[12pt]{amsart}
\usepackage{latexsym, amssymb, amsmath}

\newtheorem{theorem}{Theorem}[section]
\newtheorem{lemma}[theorem]{Lemma}

\newtheorem{proposition}[theorem]{Proposition}
\newtheorem{remark}[theorem]{Remark}
\theoremstyle{definition}

\newtheorem{problem}{Problem}

\makeatletter
    
    \@addtoreset{equation}{section}
  \makeatother


\newcommand{\Ric}{{\rm Ric}}

\newcommand{\n}{\nabla}

\begin{document}

\title[3-dimensional gradient Yamabe solitons]
{3-dimensional complete gradient Yamabe solitons with divergence-free Cotton tensor}



\author{Shun Maeta}
\address{Department of Mathematics,
 Shimane University, Nishikawatsu 1060 Matsue, 690-8504, Japan.}
\curraddr{}
\email{shun.maeta@gmail.com~{\em or}~maeta@riko.shimane-u.ac.jp}
\thanks{The author is partially supported by the Grant-in-Aid for Young Scientists,
 No.15K17542 and No.19K14534 Japan Society for the Promotion of Science, and JSPS Overseas Research Fellowships 2017-2019 No.70.}

\subjclass[2010]{53C21, 53C25, 53C20}

\date{}

\dedicatory{}

\keywords{Yamabe solitons; Cotton tensor; Bach tensor; Scalar curvature}

\commby{}

\begin{abstract}
In this paper, we classify 3-dimensional complete gradient Yamabe solitons with divergence-free Cotton tensor. We also give some classifications of complete gradient Yamabe solitons with nonpositively curved Ricci curvature in the direction of the gradient of the potential function. 
\end{abstract}

\maketitle


\bibliographystyle{amsplain}

\section{Introduction}\label{intro} 
 A Riemannian manifold $(M^n,g)$ is called a gradient Yamabe soliton if there exist a smooth function $F$ on $M$ and a constant $\rho\in \mathbb{R}$, such that 
\begin{equation}\label{YS}
(R-\rho)g=\nabla\nabla F,
\end{equation}
where $R$ is the scalar curvature on $M$ and $\n\n F$  is the Hessian of $F$.
If $\rho>0$, $\rho=0$, or $\rho<0$, then the Yamabe soliton is called shrinking, steady, or expanding.
If the potential function $F$ is constant, then the Yamabe soliton is called trivial. It is known that any compact Yamabe soliton is trivial (see for example \cite{CMM12},~\cite{Hsu12}). Yamabe solitons are special solutions of the Yamabe flow which was introduced by R. Hamilton \cite{Hamilton89}.
The Yamabe soliton equation $(\ref{YS})$ is similar to the equation of Ricci solitons.
Ricci solitons are special solutions of the Ricci flow 
which was also introduced by R. Hamilton \cite{Hamilton82}.
As is well known, by using the Ricci flow, G. Perelman \cite{Perelman1},~\cite{Perelman2},~\cite{Perelman3} proved Thurston's geometrization conjecture \cite{Thurston} and Poincar\'e conjecture.
In the first paper of Perelman, he mentioned that ``any 3-dimensional complete noncompact $\kappa$-noncollapsed gradient steady Ricci soliton with positive curvature  is rotationally symmetric, namely Bryant soliton".
In \cite{CCCMM14}, H.-D. Cao, G. Catino, Q. Chen, C. Mantegazza and L. Mazzieri gave a partial answer to the conjecture.
Finally, S. Brendle proved the conjecture \cite{Brendle}. 
In this paper, we consider the similar problem.
More precisely, we consider the following problem.
\begin{problem}\label{PCY}
Classify nontrivial non-flat complete 3-dimensional gradient Yamabe solitons.
\end{problem}
P. Daskalopoulos and N. Sesum \cite{DS13} showed that ``all locally conformally flat complete gradient Yamabe solitons with positive sectional curvature have to be rotationally symmetric". The proof was inspired by H.-D. Cao and Q. Chen's paper \cite{CC11}.
Furthermore, they constructed some examples of rotationally symmetric gradient Yamabe solitons on $\mathbb{R}^n$ with positive sectional curvature.
Recently,  H.-D. Cao, X. Sun and Y. Zhang relaxed the assumption, and showed that any nontrivial non-flat complete and locally conformally flat gradient Yamabe soliton with nonnegative scalar curvature is rotationally symmetric. 
G. Catino, C. Mantegazza and L. Mazzieri's work \cite{CMM12} is also important.
They classified complete conformal gradient solitons with nonnegative Ricci tensor. As a corollary, they classified nontrivial complete gradient Yamabe solitons with nonnegative Ricci tensor. Finally, it is shown that complete gradient Yamabe solitons are rotationally symmetric under (1) nonnegative Ricci tensor is positive definite at some point, by Catino, Mantegazza and Mazzieri \cite{CMM12}, or (2) positive Ricci curvature, by Cao, Sun and Zhang \cite{CSZ12}.
Therefore, in this paper, we consider $3$-dimensional Yamabe solitons without any assumptions for non-negativity of curvatures.
Our main theorem gives an affirmative partial answer to Problem~\ref{PCY}:

\begin{theorem}\label{main}
Let $(M^3,g,F)$ be a nontrivial non-flat $3$-dimensional complete gradient Yamabe soliton with divergence-free Cotton tensor $($i.e., Bach flat$)$. 

${\rm I}.$ If $M$ is steady, then $M$ is rotationally symmetric and equal to the warped product 
$$([0,\infty),dr^2)\times_{|\n F|}(\mathbb{S}^{2},{\bar g}_{S}),$$
where $\bar g_{S}$ is the round metric on $\mathbb{S}^{2}.$

${\rm II}$. If $M$ is shrinking, then either

$(1)$ $M$ is rotationally symmetric and equal to the warped product 
$$([0,\infty),dr^2)\times_{|\n F|}(\mathbb{S}^{2},{\bar g}_{S}),$$
where $\bar g_{S}$ is the round metric on $\mathbb{S}^{2},$ or

$(2)$ $|\n F|$ is constant and $M$ is isometric to the Riemannian product 
$$(\mathbb{R},dr^2)\times \left(\mathbb{S}^2\left(\frac{1}{2}\rho|\n F|^2\right),\bar g\right),$$
where $\mathbb{S}^2(\frac{1}{2}\rho|\n F|^2)$ is the sphere of constant Gaussian curvature $\frac{1}{2}\rho|\n F|^2$.

${\rm III}$. If $M$ is expanding, then either

$(1)$ $M$ is rotationally symmetric and equal to the warped product 
$$([0,\infty),dr^2)\times_{|\n F|}(\mathbb{S}^{2},{\bar g}_{S}),$$
where $\bar g_{S}$ is the round metric on $\mathbb{S}^{2},$ or

$(2)$ $|\n F|$ is constant and $M$ is isometric to the Riemannian product 
$$(\mathbb{R},dr^2)\times \left(\mathbb{H}^2\left(\frac{1}{2}\rho|\n F|^2\right),\bar g\right),$$
where $\mathbb{H}^2(\frac{1}{2}\rho|\n F|^2)$ is the hyperbolic space of constant Gaussian curvature $\frac{1}{2}\rho|\n F|^2$.

\end{theorem}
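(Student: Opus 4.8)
My plan is to combine the classical structure theory of concircular potentials with a direct computation of the Cotton tensor. First I would differentiate the soliton equation $\nabla\nabla F=(R-\rho)g$. Commuting covariant derivatives and tracing gives $\mathrm{Ric}(\nabla F)=(1-n)\nabla R$, which in dimension three reads $R_{jk}\nabla^kF=-2\nabla_jR$; and contracting $\nabla F$ into the Hessian gives $\nabla|\nabla F|^2=2(R-\rho)\nabla F$, so that $|\nabla F|$ is constant on the regular level sets of $F$. Since the Hessian of $F$ is pure trace, $F$ is a concircular scalar field, and the structure theory used in \cite{CMM12}, \cite{CSZ12} shows that, away from the critical points of $F$, the soliton splits as a warped product $(I,dr^2)\times_{\phi}(N^2,g_N)$ with $r$ the arclength along $\nabla F/|\nabla F|$, $\phi=|\nabla F|$, and $F=F(r)$. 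Re-inserting this metric into the soliton equation yields $F'=\phi$ together with the scalar relation $R-\rho=\phi'$.

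The hypothesis enters through $C_{ijk}=\nabla_iR_{jk}-\nabla_jR_{ik}-\tfrac14(g_{jk}\nabla_iR-g_{ik}\nabla_jR)$. Contracting with $\nabla F$ and using $R_{jk}\nabla^kF=-2\nabla_jR$ together with $\nabla_i\nabla^kF=(R-\rho)\delta_i^k$, the two Ricci-derivative terms cancel by symmetry of the Hessian of $R$, leaving
\[ C_{ijk}\nabla^kF=\tfrac14\bigl(\nabla_iF\,\nabla_jR-\nabla_jF\,\nabla_iR\bigr). \]
Because $C$ is trace free ($g^{ik}C_{ijk}=0$) and, by hypothesis, divergence free ($\nabla^iC_{ijk}=0$), the right-hand side is both closed (it is $\tfrac14\,dF\wedge dR$) and co-closed, hence a harmonic two-form. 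The aim of this step is to upgrade this to the pointwise identity $dF\wedge dR=0$, i.e.\ that $R$ is constant on the level sets of $F$; on the warped product this is exactly the statement that the Gauss curvature $K_N$ of the fibre is constant, so that $N^2$ is a space form and $M$ is locally conformally flat with $C\equiv0$. I expect this to be the main obstacle. Concretely, I would compute $\nabla^iC_{ijk}$ on the warped product and show that the divergence-free condition forces the intrinsic gradient of $K_N$ along $N$ to vanish; alternatively one can run a Bochner/integration argument for the harmonic two-form above, using completeness to discard the boundary term.

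Once $K_N\equiv k$ is constant, the problem collapses to the single autonomous ODE obtained by equating $R-\rho=\phi'$ with the warped-product scalar curvature,
\[ \rho+\phi'=-\frac{4\phi''}{\phi}+\frac{2k}{\phi^2}-\frac{2(\phi')^2}{\phi^2}, \]
to be analysed together with completeness of $g$. I would finish with a phase-plane study. If $F$ has a critical point then $\phi$ has a simple zero, and smoothness of the metric there forces the fibre to be the round $\mathbb S^2$ ($k>0$) with $r\in[0,\infty)$, giving the rotationally symmetric model common to all three cases. If $F$ has no critical point then $\phi>0$ on $\mathbb R$, and completeness selects the constant solutions of the ODE; then $\phi'=\phi''=0$ gives $R=\rho$ and $k=\tfrac12\rho|\nabla F|^2$, so that $M=(\mathbb R,dr^2)\times N$ with $N=\mathbb S^2(\tfrac12\rho|\nabla F|^2)$ for $\rho>0$ and $N=\mathbb H^2(\tfrac12\rho|\nabla F|^2)$ for $\rho<0$; for $\rho=0$ this forces $k=0$, i.e.\ $M$ flat, which is excluded and explains why the steady case is purely rotationally symmetric. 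Reading off the signs of $k$ and $\rho$ in the three regimes then reproduces the stated trichotomy.
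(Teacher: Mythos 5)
Your reduction to the warped product $(\mathbb{R},dr^2)\times_{|\n F|}(N^2,\bar g)$ and the identity $C_{ijk}\n^kF=\tfrac14(\n_iF\,\n_jR-\n_jF\,\n_iR)$ are both correct, but the step you single out as the main obstacle --- upgrading this to $dF\wedge dR=0$ so that the fibre curvature $K_N$ is constant --- is vacuous. On the warped product $R-\rho=F''(r)$, so $R$ is already a function of $r$ alone, and the scalar curvature formula $R=(F')^{-2}\bar R-2(F''/F')^2-4F'''/F'$ then forces $\bar R$ to be constant on $N^2$ (this is already part of the Cao--Sun--Zhang structure theorem you invoke). Hence your contraction of $C$ with $\n F$ extracts no information whatsoever from the Bach-flatness hypothesis. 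The paper uses the hypothesis quite differently: from the identity $\n_i\n_jB_{ji}=-B_{jk}R_{jk}-\tfrac12|C_{ijk}|^2$ it concludes that $B\equiv0$ forces the \emph{full} Cotton tensor to vanish pointwise, and then reads off from $C=0$ on the warped product the first integral $\tfrac{R}{4}(F')^2+F'F'''=c$; combined with the scalar curvature formula this gives $(F'')^2=\tfrac12\bar R-2c$, so $F''$ (hence $R$) is constant, and only then does the ODE collapse to the trichotomy $c=0$ (flat), $\rho>0$ (spherical product), $\rho<0$ (hyperbolic product).

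This missing pointwise constraint resurfaces in your final step as the unproved assertion that ``completeness selects the constant solutions'' of the ODE $\rho+\phi'=-4\phi''/\phi+2k/\phi^2-2(\phi')^2/\phi^2$. That assertion is the entire content of the theorem in the no-critical-point case, not a routine phase-plane exercise: without either a sign hypothesis on curvature or an additional identity extracted from the Cotton tensor, completeness alone does not rule out non-constant $\phi$ --- this is precisely why Daskalopoulos--Sesum and Cao--Sun--Zhang need $R\geq0$ or positivity assumptions in their classifications. The Bochner/harmonic-form alternative you sketch also cannot close the gap, both because the form $dF\wedge dR$ is identically zero anyway and because on a complete noncompact manifold harmonicity gives nothing without integrability. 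To repair the argument you must derive from $\n_kC_{kij}=0$ a genuine pointwise consequence (as the paper does, $C\equiv0$ and then the first integral above) strong enough to prove that $R$ is constant.
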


\begin{remark}
For dimension $n\geq4$, Bach \cite{Bach} introduced the Bach tensor in $1920$'s.
\begin{align}
B_{ij}
=&\frac{1}{n-3}\n^k\n^lW_{ikjl}+\frac{1}{n-2}R_{kl}W_i{}^{k}{}_j{}^l\\
=&\frac{1}{n-2}(\n_kC_{kij}+R_{kl}W_i{}^{k}{}_j{}^l),\notag
\end{align}
where $\nabla$ is the Levi-Civita connection, $W$ is the Weyl tensor, $R_{ij}$ is the Ricci tensor, and $C$ is the Cotton tensor.
 In \cite{CCCMM14}, the Bach tensor for $3$-dimensional manifolds was introduced as follows:
$$B_{ij}=\n_kC_{kij}.$$


\end{remark}

The remaining sections are organized as follows. Section~$\ref{Pre}$ contains some necessary definitions and preliminary geometric results.
Section~$\ref{Proof of main}$ is devoted to the proof of Theorem~$\ref{main}$. 
In section~\ref{NRIC}, we consider complete gradient Yamabe solitons with nonpositively curved Ricci curvature in the direction of the gradient of the potential function.


\section{Preliminary}\label{Pre}
The Riemannian curvature tensor is defined by 
$$R(X,Y)Z=-\n_X\n_YZ+\n_Y\n_XZ+\n_{[X,Y]}Z.$$
The Ricci tensor $R_{ij}$ (also denoted by $\Ric$) is defined by 
$R_{ij}=R_{ipjp}.$
The Weyl tensor $W$ and  the Cotton tensor C are defined by 
\begin{align*}
R_{ijkl}
=&W_{ijkl}
+\frac{R}{(n-1)(n-2)}(g_{il}g_{jk}-g_{ik}g_{jl})\\
&-\frac{1}{n-2}(R_{il}g_{jk}+R_{jk}g_{il}-R_{ik}g_{jl}-R_{jl}g_{ik})\\
=&W_{ijkl}+S_{ik}g_{jl}+S_{jl}g_{ik}-S_{il}g_{jk}-S_{jk}g_{il},
\end{align*}
\begin{align*}
C_{ijk}
=&\nabla_iR_{jk}-\nabla_jR_{ik}-\frac{1}{2(n-1)}(g_{jk}\nabla_iR-g_{ik}\nabla_jR)\\
=&\n_iS_{jk}-\n_jS_{ik},\notag
\end{align*}
where $S=\Ric-\frac{1}{2(n-1)}Rg$ is the Schouten tensor.
 The Cotton tensor is skew-symmetric in the first two indices and totally trace free, that is,
$$C_{ijk}=-C_{jik} \quad \text{and} \quad g^{ij}C_{ijk}=g^{ik}C_{ijk}=0.$$
As is well known, a Riemannian manifold $(M^n,g)$ is locally conformally flat if and only if 
(1) for $n\geq4$, the Weyl tensor vanishes; (2) for $n=3$, the Cotton tensor vanishes.
Moreover, for $n\geq4$, if the Weyl tensor vanishes, then the Cotton tensor vanishes. We also see that for $n=3$, the Weyl tensor always vanishes, but the Cotton tensor does not vanish in general.

We prove some formulas needed later. 
Taking trace of the Yamabe soliton equation \eqref{YS},
\begin{equation}\label{TYS}
n(R-\rho)=\Delta F,
\end{equation}
where $\Delta$ is the Laplacian on $M$. 
In general, we have
\begin{equation}\label{p.1}
\Delta {\nabla}_iF={\nabla}_i\Delta F+R_{ij}{\nabla}_jF.
\end{equation}
Substituting 
\begin{equation*}
\Delta {\nabla}_iF={\nabla}_k{\nabla}_k{\nabla}_iF={\nabla}_k((R-\rho)g_{ki})={\nabla}_iR,
\end{equation*}
and
\begin{equation*}
{\nabla}_i\Delta F={\nabla}_i(n(R-\rho))=n{\nabla}_iR,
\end{equation*}
into $(\ref{p.1})$, we have
\begin{equation}\label{p.2}
(n-1)\nabla_iR+R_{il}\nabla_lF=0.
\end{equation}
Thus, we have 
\begin{equation}\label{p.3}
(n-1)g(\nabla R,\nabla F)=-\Ric(\nabla F,\nabla F).
\end{equation}
On the other hand, by $(\ref{p.2})$ and the contracted second Bianchi identity, 
\begin{equation}\label{p.4}
(n-1)\Delta R+\frac{1}{2}g(\nabla R, \nabla F)+R(R-\rho)=0.
\end{equation}
Combining $(\ref{p.3})$ with $(\ref{p.4})$, we obtain
\begin{equation}\label{p.5}
\Delta R=\frac{1}{2(n-1)^2}\Ric(\nabla F,\nabla F)-\frac{1}{n-1}R(R-\rho).
\end{equation}
\quad\\


\section{Proof of Theorem $\ref{main}$}\label{Proof of main}
In this section, we prove Theorem $\ref{main}$.
To prove Theorem \ref{main}, we use the following useful theorem by H.-D. Cao, X. Sun and Y. Zhang:

\begin{theorem}[\cite{CSZ12}]\label{Thm of CSZ12}
Let $(M^n,g,F)$ be a nontrivial complete gradient Yamabe soliton. Then, $|\n F|^2$ is constant on regular level surfaces of $F$, and either

$(1)$ $F$ has a unique critical point at some point $p_0\in M$, and $M$ is rotationally symmetric and equal to the warped product 
$$([0,\infty),dr^2)\times_{|\n F|}(\mathbb{S}^{n-1},{\bar g}_{S}),$$
where $\bar g_{S}$ is the round metric on $\mathbb{S}^{n-1},$ or

$(2)$ $F$ has no critical point and $M$ is the warped product 
$$(\mathbb{R},dr^2)\times_{|\n F|}(N^{n-1},\bar g),$$
where N is a Riemannian manifold of constant scalar curvature.
Furthermore, if the Ricci curvature of $N$ is nonnegative, then $M$ is isometric to the Riemannian product 
$(\mathbb{R},dr^2)\times(N^{n-1},\bar g)$; if $R\geq0$, then either $R>0$, or $R=\overline R=0$ and $(M,g)$ is isometric to the Riemannian product $(\mathbb{R},dr^2)\times(N^{n-1},\bar g)$.
\end{theorem}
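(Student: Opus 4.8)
The plan is to read the soliton identity \eqref{YS}, namely $\n\n F=(R-\rho)g$, as a \emph{concircular} condition on $F$ (its Hessian is a scalar multiple of the metric) and to run a Tashiro-type structure analysis. First I would record the two algebraic consequences that drive everything. Contracting \eqref{YS} with $\n F$ gives $\tfrac12\n|\n F|^2=(\n\n F)(\n F,\cdot)^\sharp=(R-\rho)\n F$, so $\n|\n F|^2$ is everywhere parallel to $\n F$; hence on the open regular set $\Omega=\{\n F\neq0\}$ the function $|\n F|^2$ is constant on each connected level set of $F$ and may be written $|\n F|^2=\psi(F)$. Comparing $\n|\n F|^2=\psi'(F)\n F$ with the identity just derived gives $R-\rho=\tfrac12\psi'(F)$, so $R$ too is a function of $F$ alone and is constant on regular level sets. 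This already yields the first assertion of the theorem.

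Next I would produce the warped product on $\Omega$. Writing $\nu=\n F/|\n F|$, equation \eqref{YS} shows that the shape operator of a level set $\Sigma_c=\{F=c\}$ equals $\tfrac{R-\rho}{|\n F|}\,\mathrm{Id}$, so each $\Sigma_c$ is totally umbilic with mean curvature constant on $\Sigma_c$; moreover $\n_{\n F}\n F=(R-\rho)\n F$ shows the $\nu$-trajectories are geodesics. Taking $r$ to be arclength along these orthogonal geodesics, the metric splits as $g=dr^2+g_r$, umbilicity integrates the shape-operator ODE to $g_r=\varphi(r)^2\bar g$ for a fixed slice metric $\bar g$, and the tangential part of \eqref{YS} forces $\varphi'/\varphi=F''/F'$, i.e. $\varphi\propto|\n F|$. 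Feeding $R=R(r)$ into the warped-product formula for scalar curvature shows $\bar R/\varphi^2$ depends on $r$ only, so the slice $N$ has constant scalar curvature $\bar R$.

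The dichotomy, and the first hard part, is the global analysis of the critical set $\{\n F=0\}=\{\varphi=0\}$. Using completeness I would show that if $F$ has a critical point $p_0$ then $\varphi(0)=0$, $\varphi'(0)=1$, the orthogonal geodesics all issue from $p_0$, and smoothness of $g$ at $p_0$ forces the slice to be the round $\mathbb{S}^{n-1}$, giving the rotationally symmetric model over $[0,\infty)$. Ruling out a second critical point (equivalently, a second zero of $\varphi$) and thereby establishing uniqueness is the delicate step, handled through the ODE governing $\varphi$ together with the completeness of $M$. If instead $\n F$ never vanishes, then $r$ ranges over all of $\mathbb{R}$ and $M=(\mathbb{R},dr^2)\times_{|\n F|}(N^{n-1},\bar g)$.

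Finally, for the rigidity statements I would combine the warped-product Ricci and scalar identities with the soliton identities \eqref{p.3}--\eqref{p.5}, which express $\Ric(\nu,\nu)$ and $\Delta R$ in terms of $\varphi,\varphi',\varphi''$, to obtain a single second-order ODE for $\varphi$ on $\mathbb{R}$. Under $\Ric_N\geq0$ (respectively $R\geq0$) I expect a convexity/monotonicity argument, using completeness of the line factor to exclude runaway or vanishing behaviour of $\varphi$, to force $\varphi'\equiv0$ and hence the Riemannian product, the only exception being the borderline flat case $R=\bar R=0$, which is itself already a product. I anticipate the main obstacle to be precisely this ODE analysis together with the critical-point uniqueness: both are global arguments that genuinely exploit completeness rather than the pointwise soliton identity, and the sign-chasing needed to close the $\varphi'\equiv0$ conclusion under the stated curvature hypotheses is where the real work lies.
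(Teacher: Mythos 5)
First, note that this paper contains no proof of the statement: Theorem~\ref{Thm of CSZ12} is imported verbatim from \cite{CSZ12}, so the comparison is with that source's argument, which your outline reproduces in structure. The Tashiro-type analysis you set up is exactly the known route (also in \cite{CC11}, \cite{DS13}): the identity $\tfrac12\n|\n F|^2=(R-\rho)\n F$ gives constancy of $|\n F|^2$ on regular level sets and $R-\rho=\tfrac12\psi'(F)$; the level sets are totally umbilic with geodesic normal flow; integrating the shape-operator equation yields the local form $g=dr^2+(F'(r))^2\bar g$; and feeding $R=R(r)$ into the warped-product scalar curvature formula shows $\bar R$ is constant. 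All of this is correct and coincides with the computations this paper itself reuses in Sections~\ref{Proof of main} and~\ref{NRIC}.

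There are, however, two genuine gaps at precisely the places you flag but do not fill. (i) Uniqueness of the critical point: ``the ODE governing $\varphi$ together with completeness'' is not enough as stated, since the profile equations by themselves admit solutions whose warping factor vanishes twice (closing $M$ up into a compact rotationally symmetric manifold); the missing global input is that critical points of a function with $\n\n F=(R-\rho)g$ are isolated and cap $M$ off smoothly, so a second critical point would force $M$ compact, while compact gradient Yamabe solitons are trivial (\cite{CMM12}, \cite{Hsu12}, as recalled in the introduction), contradicting nontriviality. (ii) You misread the $R\geq0$ clause: the theorem asserts only the dichotomy ``either $R>0$, or $R\equiv0$, $\bar R=0$, and $M$ is a product'' --- it does not force $\varphi'\equiv0$ when $R>0$, and no convexity argument can, because $R\geq0$ yields only $F''\geq-\rho$ and no sign on $F'''$ or $\varphi''$. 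The correct mechanism is the strong minimum principle applied to the elliptic identity \eqref{p.4}, $(n-1)\Delta R+\tfrac12 g(\n R,\n F)+R(R-\rho)=0$: if $R\geq0$ vanishes at one point it vanishes identically, whence $F''=-\rho$; since in case $(2)$ one has $F'>0$ on all of $\mathbb{R}$, completeness forces $\rho=0$, so $F'$ is constant and $\bar R=R(F')^2=0$. Your concavity/monotonicity idea is the right mechanism only for the $\Ric_N\geq0$ clause (compare the Remark following Proposition~\ref{prop1}, where $F'''\leq0$ makes $F'$ a positive weakly concave function on $\mathbb{R}$, hence constant), but even there you still owe the derivation of the requisite sign from the hypothesis on $N$ rather than on $M$.
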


\begin{proof}[Proof of Theorem~$\ref{main}$]

We only have to consider the case (2) of Theorem $\ref{Thm of CSZ12}$.
Since 
\begin{align*}
\n_iB_{ij}
=&\n_i\n_kC_{kij}\\
=&\n_i\n_k(\n_kS_{ij}-\n_iS_{kj})\\
=&\n_i\n_k\n_kS_{ij}-\n_k\n_i\n_kS_{ij}\\
=&R_{ikkp}\n_pS_{ij}+R_{ikip}\n_kS_{pj}+R_{ikjp}\n_kS_{ip}\\
=&-R_{ip}\n_pS_{ij}+R_{kp}\n_kS_{pj}+(S_{ij}g_{kp}+S_{kp}g_{ij}-S_{ip}g_{kj}-S_{kj}g_{ip})\n_kS_{ip}\\
=&S_{ij}C_{kik}+S_{ip}C_{ijp}\\
=&-C_{jip}R_{ip},
\end{align*}
\begin{align}\label{m1}
\n_i\n_jB_{ji}=-\n_iC_{ijk}R_{jk}-C_{ijk}\n_iR_{jk}.
\end{align}
By the definition and a property of the Cotton tensor,
\begin{align*}
C_{ijk}\n_iR_{jk}
=&C_{ijk}(C_{ijk}+\n_jR_{ik}+\frac{1}{4}(g_{jk}\n_iR-g_{ik}\n_jR))\\
=&|C_{ijk}|^2-C_{jik}\n_jR_{ik}.
\end{align*}
Thus, we have
\begin{equation}\label{m2}
C_{ijk}\n_iR_{jk}=\frac{1}{2}|C_{ijk}|^2.
\end{equation}
Substituting (\ref{m2}) into (\ref{m1}), we have
$$\n_i\n_jB_{ji}=-B_{jk}R_{jk}-\frac{1}{2}|C_{ijk}|^2.$$
By the assumption, the Cotton tensor vanishes.

As in the proof of  Theorem \ref{Thm of CSZ12}, it is shown that 
in any open neighborhood $U$ of $N^{2}$ in which $F$ has no critical points, 
$$g=dr^2+(F'(r))^2{\bar g}=dr^2+\frac{(F'(r))^2}{(F'(r_0))^2}g_{ab}(r_0,x)dx^adx^b,$$ where $(x^2, x^3)$ is any local coordinates system on $N^{2}$ and $\bar g=(F'(r_0))^{-2}\bar g_{r_0}$, where $\bar g_{r_0}$ is the induced metric on $N^{2}$.

By a direct calculation, we can get formulas of the warped product manifold of the warping function $|\n F|=F'(r).$
 For $a,b,c,d=2,3,$
\begin{align}\label{RT1}
R_{1a1b}&=-F'F'''{\bar g}_{ab},\quad R_{1abc}=0,\\
R_{abcd}&=(F')^2{\bar R}_{abcd}+(F'F'')^2(\bar g_{ad}\bar g_{bc}-\bar g_{ac}\bar g_{bd}),\notag
\end{align}
\begin{align}\label{RT2}
R_{11}=&-2\frac{F'''}{F'},\quad 
R_{1a}=0,\\
R_{ab}=&\bar R_{ab}-((F'')^2+F'F''')\bar g_{ab},\notag
\end{align}
\begin{align}\label{RT3}
R=(F')^{-2}\bar R-2\Big(\frac{F''}{F'}\Big)^2-4\frac{F'''}{F'},
\end{align}
where the curvature tensors with bar are the curvature tensors of $(N,\bar g)$.
By (\ref{YS}), 
\begin{align}\label{R-rho}
R-\rho=F''.
\end{align}
Since $(N^2,{\overline g})$ is a 2-dimensional manifold,
$${\overline R}_{abcd}=-\frac{\bar R}{2}(\bar g_{ad} \bar g_{bc}-\bar g_{ac}\bar g_{bd}),$$
$$\bar R_{ad}=\frac{\bar R}{2}\bar g_{ad}.$$
Substituting these into (\ref{RT1}) and (\ref{RT2}),  we have
\begin{align}\label{RT11}
R_{1a1b}&=-F'F'''{\bar g}_{ab},\quad R_{1abc}=0,\\
R_{abcd}&=-(F')^3\Big(\frac{1}{2}F'R+2F'''\Big)(\bar g_{ad}\bar g_{bc}-\bar g_{ac}\bar g_{bd}),\notag
\end{align}
\begin{align}\label{RT22}
R_{11}=&-2\frac{F'''}{F'},\quad 
R_{1a}=0,\\
R_{ab}=&\Big(\frac{R}{2}(F')^2+F'F'''\Big)\bar g_{ab}.\notag
\end{align}
Hence, the Cotton tensor $C_{ijk}$ is 
\begin{equation*}
C_{ijk}=\left\{
\begin{aligned}
&\nabla_1(R_{ab}-\frac{1}{4}Rg_{ab})
&\quad (a,b=2,3),\\
&0\qquad (other).
\end{aligned}
\right.
\end{equation*}
Thus, we have 
\begin{equation}\label{key2}
\frac{R}{4}(F')^2+F'F'''=c~(\text{constant}).
\end{equation}
Combining \eqref{key2} with $(\ref{RT3})$, we have
$$(F'')^2=\frac{1}{2}\bar R -2c.$$
Since $\bar R$ is constant, $F''$ is constant.
Thus, $R$ is constant by the Yamabe soliton equation.
Therefore the equation $(\ref{key2})$ is as follows. 
\begin{equation}\label{key3}
\frac{1}{4}R(F')^2=c.
\end{equation}

If $c=0,$ then $R=0$. From this and $(\ref{RT11})$, $M$ is flat. 

If $c\not=0$, then we have $R\not=0$ and $F'$ is constant. Thus, $R-\rho=F''=0.$

Case I. $M$ is steady: We have $R=\rho=0$, which is a contradiction.

Case II. $M$ is shrinking: Since $R=\rho>0$ and $(\ref{RT3})$,
$\bar R=R(F')^2=\rho|\n F|^2>0$.

Case III. $M$ is expanding: Since $R=\rho<0$ and $(\ref{RT3})$,
$\bar R=R(F')^2=\rho|\n F|^2<0$.

\end{proof}

\section{Complete gradient Yamabe solitons with $\Ric(\nabla F,\nabla F)\leq0$}\label{NRIC}

As mentioned before, H.-D. Cao, X. Sun and Y. Zhang showed that any nontrivial non-flat complete and locally conformally flat gradient Yamabe soliton with $R\geq0$ is rotationally symmetric. Therefore, in this section, we consider Yamabe solitons with $\Ric(\n F,\n F)\leq0$ instead of ``locally conformally flat".

\begin{proposition}\label{prop1}
Let $(M^n,g,F)$ be an $n$-dimensional complete gradient Yamabe soliton with $\Ric(\n F, \n F)\leq0$. 
Suppose that $F$ has no critical point. Then, the following holds.

$(1)$ $M$ is shrinking or steady: If $R\geq\rho$, then $R=\rho.$

$(2)$ There exists no expanding soliton with $R\geq0.$
\end{proposition}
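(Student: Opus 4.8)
The plan is to combine the Cao--Sun--Zhang structure theorem (Theorem~\ref{Thm of CSZ12}) with the integral identity \eqref{p.5} relating $\Delta R$, $\Ric(\n F,\n F)$, and $R(R-\rho)$. The central observation is that when $F$ has no critical point, Theorem~\ref{Thm of CSZ12} says $M$ is a warped product $(\R,dr^2)\times_{|\n F|}(N^{n-1},\bar g)$, so $R=R(r)$ depends only on the warping variable $r$ (the scalar curvature is constant on level sets of $F$, hence on the $N$-slices), and the critical-point-free assumption makes $r$ a global coordinate running over all of $\R$. Thus $R$ is a function of a single real variable, and I can convert the elliptic inequalities into an ODE-type comparison on $\R$.

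First I would feed the hypothesis $\Ric(\n F,\n F)\le 0$ into \eqref{p.5} to obtain the pointwise inequality
\begin{equation*}
\Delta R \le -\frac{1}{n-1}R(R-\rho).
\end{equation*}
Next I would exploit \eqref{p.3}, which gives $(n-1)g(\n R,\n F)=-\Ric(\n F,\n F)\ge 0$, so $R$ is nondecreasing along the flow of $\n F$; together with the warped-product structure this pins down the monotonicity of $R(r)$. For part~(1), assuming $R\ge\rho$ (with $\rho\ge 0$ in the shrinking or steady case) the right-hand side $-\tfrac{1}{n-1}R(R-\rho)$ is $\le 0$, so $R$ is superharmonic, $\Delta R\le 0$. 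The strategy is then to argue that a bounded-below superharmonic function on the complete noncompact warped product, combined with the sign information from \eqref{p.3}, forces $R(R-\rho)\equiv 0$, and to rule out $R\equiv 0 \ne \rho$ by using that $R\ge\rho$ pointwise. This yields $R=\rho$. For part~(2), in the expanding case $\rho<0$, the assumption $R\ge 0$ gives $R-\rho\ge R-\rho>0$, so $R(R-\rho)\ge 0$ and again $\Delta R\le 0$; I would derive a contradiction by showing this forces $R$ constant, whence $\Delta R=0$ compels $R(R-\rho)=0$, i.e.\ $R=0$, but then \eqref{p.5} together with $\Ric(\n F,\n F)\le 0$ and $R(R-\rho)=0$ must be checked against the expanding sign of $\rho$ to exhaust the cases.

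The main obstacle I anticipate is the passage from the pointwise differential inequality $\Delta R\le 0$ (or $\ge 0$) to a rigidity conclusion on a \emph{noncompact} complete manifold, where the maximum principle does not apply directly and $R$ need not attain an extremum. The cleanest route is probably to reduce everything to the one-variable function $R(r)$ on $\R$ using the warped-product form, write $\Delta R$ explicitly in terms of $R''(r)$, $R'(r)$ and the warping function $|\n F|=F'(r)$, and then run a one-dimensional comparison/ODE argument; the sign conditions from \eqref{p.3} supply the needed control on $R'(r)$ at the boundary behavior in $r$. Care is needed to ensure the relevant integrals or limits converge, i.e.\ that completeness in the $r$-direction is genuinely used to close off the monotone-but-bounded scenario, which is where I expect the argument to require the most attention.
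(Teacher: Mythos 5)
Your plan follows the paper's proof essentially step for step: reduce to the warped-product coordinate $r\in\mathbb{R}$ (no critical points of $F$), note that $\Ric(\n F,\n F)=-(n-1)F'F'''\leq0$ forces $R'=F'''\geq0$, combine the one-variable formula $\Delta R=R''+(n-1)\frac{F''}{F'}R'$ with \eqref{p.5} to get $R''\leq0$ under the stated sign hypotheses, and close with the elementary fact that $F''=R-\rho$ is then a nonnegative weakly concave function on all of $\mathbb{R}$, hence constant (after which \eqref{R''} gives $R=\rho$ in case (1) and a contradiction with $F'>0$ linear nonconstant in case (2)). The one point you flag as delicate --- closing off the monotone-but-bounded scenario on the noncompact line --- is resolved by exactly this concavity observation, so your outline matches the paper's argument.
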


As a corollary, by the similar argument as in the proof of Theorem $\ref{main}$, we can classify nontrivial non-flat complete 3-dimensional gradient Yamabe solitons:

If $M$ is shrinking with $R\geq\rho$ and $\Ric(\n F,\n F)\leq0$, then either

(1) $M$ is rotationally symmetric and equal to the warped product 
$$([0,\infty),dr^2)\times_{|\n F|}(\mathbb{S}^{2},{\bar g}_{S}),$$
where $\bar g_{S}$ is the round metric on $\mathbb{S}^{2}$, or

(2) $|\n F|$ is constant and $M$ is isometric to the Riemannian product 
$$(\mathbb{R},dr^2)\times \left(\mathbb{S}^2\left(\frac{1}{2}\rho|\n F|^2\right),\bar g\right),$$
where $\mathbb{S}^2(\frac{1}{2}\rho|\n F|^2)$ is the sphere of constant Gaussian curvature $\frac{1}{2}\rho|\n F|^2$.

If $M$ is steady or expanding with $R\geq0$ and $\Ric(\n F,\n F)\leq0$, then 
$M$ is rotationally symmetric and equal to the warped product 
$$([0,\infty),dr^2)\times_{|\n F|}(\mathbb{S}^{2},{\bar g}_{S}).$$
\\

\begin{proof}[Proof of Proposition $\ref{prop1}$]
As in the proof of Theorem $\ref{Thm of CSZ12}$, it is shown that
 in any open neighborhood $U$ of $N^{n-1}$ in which $F$ has no critical points, 
$$g=dr^2+(F'(r))^2{\bar g}=dr^2+\frac{(F'(r))^2}{(F'(r_0))^2}g_{ab}(r_0,x)dx^adx^b,$$ where $(x^2,\cdots, x^n)$ is any local coordinates system on $N^{n-1}$ and $\bar g=(F'(r_0))^{-2}\bar g_{r_0}$, where $\bar g_{r_0}$ is the induced metric on $N^{n-1}$.

By a direct calculation, we can get formulas of the warped product manifold of the warping function $|\n F|=F'(r).$
 For $a,b,c,d=2,\cdots,n,$
\begin{align}\label{RT1-2}
R_{1a1b}&=-F'F'''{\bar g}_{ab},\quad R_{1abc}=0,\\
R_{abcd}&=(F')^2{\bar R}_{abcd}+(F'F'')^2(\bar g_{ad}\bar g_{bc}-\bar g_{ac}\bar g_{bd}),\notag
\end{align}
\begin{align}\label{RT2-2}
R_{11}=&-(n-1)\frac{F'''}{F'},\quad 
R_{1a}=0,\\
R_{ab}=&\bar R_{ab}-((n-2)(F'')^2+F'F''')\bar g_{ab},\notag
\end{align}
\begin{align}\label{RT3-2}
R=(F')^{-2}\bar R-(n-1)(n-2)\Big(\frac{F''}{F'}\Big)^2-2(n-1)\frac{F'''}{F'}.
\end{align}
By (\ref{YS}), 
\begin{align}\label{R-rho-2}
R-\rho=F''.
\end{align}
Since $\n F=F'\frac{\partial}{\partial r}$, 
\begin{equation}\label{Ricnf}
\Ric(\n F,\n F)=(F')^2R_{11}=-(n-1)F'F'''.
\end{equation}
By the assumption, $R'=F'''\geq0$.
By the definition of the Laplacian, 
\begin{align*}
\Delta R
=&g^{ij}(\partial_i\partial_jR-\Gamma_{ij}^k\partial_kR)\\
=&R''-g^{ij}\Gamma_{ij}^1R',
\end{align*}
where $\partial_1=\frac{\partial}{\partial r}$ and $\partial _i=\frac{\partial}{\partial x_i},~(i=2,\cdots,n).$
The Christoffel symbol is given by
\begin{align*}
\Gamma_{ij}^1
=&\frac{1}{2}g^{1k}(\partial_ig_{jk}+\partial_{j}g_{ik}-\partial_kg_{ij})\\
=&\frac{1}{2}(\partial_ig_{j1}+\partial_{j}g_{i1}-\partial_1g_{ij}).
\end{align*}
Here, 
$$\partial_1g_{11}=0, \quad \partial_ag_{11}=0\quad\text{and}\quad \partial_1g_{ab}=2F'F''\bar g_{ab}.$$
Thus, we have
$$\Gamma_{11}^1=0,\quad \Gamma_{1a}^1=0\quad\text{and}\quad\Gamma_{ab}^1=-F'F''\bar g_{ab}.$$
Hence,
\begin{equation}\label{DR}
\Delta R=R''+(n-1)\frac{F''}{F'}R'.
\end{equation}
Combining $(\ref{DR})$ with $(\ref{p.5})$,
\begin{equation}\label{R''}
R''=-(n-1)\frac{F''}{F'}R'-\frac{1}{2(n-1)}F'R'-\frac{1}{n-1}R(R-\rho).
\end{equation}

Case $(1)$, $M$ is shrinking or steady:
 By the assumption, $R\geq\rho(\geq0)$, that is, $F''\geq0$.
From this and $R'\geq0$, 
$$F^{(4)}=R''\leq0.$$
Thus, $F''$ is a non-negative weakly concave function, which means that $F''$ must be constant.
Hence, $R$ is constant. By $(\ref{R''})$, $R=\rho$.

Case $(2)$, $M$ is expanding: Assume that there exists an expanding Yamabe soliton with $R\geq0.$ The same argument shows $R=0,$ that is, $F''=-\rho<0$.
From this, $F'(>0)$ is a non-constant linear function, which cannot happen.

\end{proof}

\begin{remark}
If we assume that $\Ric (\n F,\n F)\geq0$ instead of $\Ric(\n F,\n F)\leq0$ on Proposition $\ref{prop1}$, then we immediately obtain $R=\rho$ without the assumption $R\geq\rho$ $($or $R\geq0$$)$.
In fact, by $(\ref{Ricnf})$, $F'''\leq0.$ Thus, $F'$ is a positive weakly concave function, which means that $F'$ must be constant.
Therefore, $R=\rho$.

As a result, we can get the same classification as in Theorem $\ref{main}$ for $n=3$, under $\Ric(\n F, \n F)\geq0$ instead of flatness of the Bach tensor.
\end{remark}

By the similar argument as in the proof of Theorem $\ref{main}$, 
we can get the following classification of complete gradient Yamabe solitons with $\Ric(\nabla F, \nabla F)\leq0$.

\begin{lemma}\label{lem1}
Let $(M^n,g,F)$ be an $n$-dimensional complete gradient Yamabe soliton with Ricci curvature bounded from below and $\Ric(\n F, \n F)\leq0$. Suppose that $F$ has no critical point.
Then, the following holds.

$(1)$ There exists no shrinking soliton with $R\leq0$.

$(2)$ $M$ is steady or expanding: If $R\leq \rho$, then $R=\rho.$

\end{lemma}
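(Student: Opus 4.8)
The plan is to mirror the structure of the proof of Proposition~\ref{prop1}, using the same reduction to the warped-product case and the same ODE machinery, but now exploiting the sign conditions $R \leq 0$ (shrinking) or $R \leq \rho$ (steady/expanding) instead of the lower bounds used there. First I would invoke Theorem~\ref{Thm of CSZ12}, so that in a neighborhood where $F$ has no critical point the metric takes the warped-product form with warping function $F'(r)$; this immediately makes available all the identities $(\ref{RT2-2})$, $(\ref{RT3-2})$, $(\ref{Ricnf})$, $(\ref{DR})$ and $(\ref{R''})$. The key observation carried over from Proposition~\ref{prop1} is that the hypothesis $\Ric(\n F,\n F)\leq 0$ together with $(\ref{Ricnf})$ gives $F'''=R'\geq 0$, i.e.\ $R$ is nondecreasing along $r$. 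This single monotonicity fact, combined with the second-order ODE $(\ref{R''})$, will drive both cases.

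For part $(1)$, suppose toward a contradiction that $M$ is shrinking with $R\leq 0$. Since $\rho>0$ we have $R-\rho = F'' < 0$, while $R'=F'''\geq 0$; feeding these signs into $(\ref{R''})$ gives $F^{(4)}=R''\geq 0$ (each of the three terms on the right is nonnegative: $-(n-1)(F''/F')R'\geq0$ because $F''<0$ and $F'>0$ and $R'\geq0$, the middle term $-\tfrac{1}{2(n-1)}F'R'\leq 0$ requires care, and $-\tfrac{1}{n-1}R(R-\rho)\leq 0$ since $R\leq0$ and $R-\rho<0$). I will have to check the signs term by term to see that $F''$ is forced to be convex and nondecreasing, and then use the assumption that the Ricci curvature is bounded from below—equivalently by $(\ref{RT2-2})$ a bound on $F'''/F'$—to rule out the resulting growth. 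For part $(2)$, with $M$ steady or expanding and $R\leq\rho$, we have $F''=R-\rho\leq 0$ together with $R'\geq 0$; substituting into $(\ref{R''})$ should yield $R''\leq 0$, so $R'$ is nonincreasing while $R$ is nondecreasing and bounded, forcing $R'\to 0$ and ultimately $R'\equiv 0$, hence $R$ constant and $R=\rho$ by $(\ref{R''})$.

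The main obstacle I anticipate is the role of the hypothesis ``Ricci curvature bounded from below,'' which is new relative to Proposition~\ref{prop1}. The naive concavity/convexity argument shows a derivative is monotone, but on a complete noncompact soliton one needs a boundedness or growth control to conclude that a monotone function is in fact constant; this is exactly where the lower Ricci bound enters, through $(\ref{Ricnf})$ and $(\ref{RT2-2})$, to prevent $F'''$ (hence $R'$) from running off to produce unbounded curvature or to force $F'$ to vanish somewhere (contradicting the no-critical-point assumption). I expect the delicate point to be converting the one-sided curvature bound into the statement that the monotone quantity cannot be strictly monotone, i.e.\ ruling out solutions of $(\ref{R''})$ that are genuinely non-constant, rather than the sign bookkeeping in $(\ref{R''})$ itself. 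I would handle this by arguing that a strictly monotone $R'$ with the established convexity would force either $F'\to 0$ or a curvature blow-up incompatible with the lower bound, completing both parts by contradiction.
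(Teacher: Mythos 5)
Your proposal diverges from the paper's argument and, as written, contains a genuine gap in both parts. The sign bookkeeping in $(\ref{R''})$ does not close: in part $(1)$ (shrinking, $R\leq0$, so $F''=R-\rho<0$ and $R'=F'''\geq0$) the first term $-(n-1)\frac{F''}{F'}R'$ is nonnegative, but, as you yourself note parenthetically, the middle term $-\frac{1}{2(n-1)}F'R'$ and the last term $-\frac{1}{n-1}R(R-\rho)$ are both nonpositive, so the asserted conclusion $R''\geq0$ simply does not follow. Symmetrically, in part $(2)$ the first term is nonnegative while the other two are nonpositive, so $R''\leq0$ is not established either. The subsequent step --- converting the lower Ricci bound into the statement that a monotone $R'$ must vanish identically, by ``ruling out curvature blow-up or $F'\to0$'' --- is exactly the heart of the lemma, and it is left as an unexecuted intention rather than an argument. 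So neither case is actually proved.

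The paper's proof is entirely different and much shorter: it never enters the warped-product ODE at all. For part $(1)$ one sets $L=-R\geq0$ and uses $(\ref{p.5})$ together with $\Ric(\n F,\n F)\leq0$ and $\rho>0$ to obtain the differential inequality $\Delta L\geq\frac{1}{n-1}L^2$; the hypothesis that the Ricci curvature is bounded from below is used precisely to invoke the Omori--Yau maximum principle, which forces $L\equiv0$, i.e.\ $R\equiv0$, whence $F''=-\rho<0$ and $F'$ would be a non-constant linear function, impossible for a function that stays positive on all of $\mathbb{R}$. For part $(2)$ one sets $u=\rho-R\geq0$, notes $R(R-\rho)=u^2-\rho u\geq u^2$ since $\rho\leq0$, derives $\Delta u\geq\frac{1}{n-1}u^2$ in the same way, and concludes $u\equiv0$ by Omori--Yau. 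If you wish to salvage your ODE route, you would have to replace the faulty pointwise sign analysis of $(\ref{R''})$ by a genuine Keller--Osserman-type argument for the resulting differential inequality, which is in effect what the Omori--Yau maximum principle accomplishes globally in one step.
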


\begin{proposition}
Let $(M^3,g,F)$ be a nontrivial non-flat $3$-dimensional complete gradient Yamabe soliton with Ricci curvature bounded from below and $\Ric(\n F, \n F)\leq0$.

${\rm I}.$ $M$ is shrinking or steady: If $R\leq0$, then $M$ is rotationally symmetric and equal to the warped product 
$$([0,\infty),dr^2)\times_{|\n F|}(\mathbb{S}^{2},{\bar g}_{S}),$$
where $\bar g_{S}$ is the round metric on $\mathbb{S}^{2}.$

${\rm II}$. $M$ is expanding: If $R\leq\rho$, then either

$(1)$ $M$ is rotationally symmetric and equal to the warped product 
$$([0,\infty),dr^2)\times_{|\n F|}(\mathbb{S}^{2},{\bar g}_{S}),~\text{or}$$

$(2)$ $|\n F|$ is constant and $M$ is isometric to the Riemannian product 
$$(\mathbb{R},dr^2)\times \left(\mathbb{H}^2\left(\frac{1}{2}\rho|\n F|^2\right),\bar g\right),$$
where $\mathbb{H}^2(\frac{1}{2}\rho|\n F|^2)$ is the hyperbolic space of constant Gaussian curvature $\frac{1}{2}\rho|\n F|^2$.
\end{proposition}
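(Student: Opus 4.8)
The plan is to follow the strategy of the proof of Theorem~\ref{main}: apply Theorem~\ref{Thm of CSZ12} to split $M$ into two structural cases, eliminate the non-symmetric one with Lemma~\ref{lem1}, and read off the geometry from the warped-product curvature formulas \eqref{RT3} and \eqref{RT11}. Concretely, being a nontrivial complete gradient Yamabe soliton, $M$ is either $(1)$ rotationally symmetric with $F$ having a unique critical point, equal to the warped product $([0,\infty),dr^2)\times_{|\n F|}(\mathbb{S}^2,\bar g_S)$, or $(2)$ $F$ has no critical point and $M$ is the warped product $(\mathbb{R},dr^2)\times_{|\n F|}(N^2,\bar g)$ with $N^2$ of constant scalar curvature. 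Case $(1)$ is exactly conclusion~I as well as conclusion~II$(1)$, so all the real work lies in case $(2)$, where $F$ has no critical point, $\Ric(\n F,\n F)\le0$, and the Ricci curvature is bounded below---precisely the hypotheses of Lemma~\ref{lem1}.

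For Case~I, suppose $M$ is shrinking or steady with $R\le0$ and, seeking a contradiction, that we are in case $(2)$. If $M$ is shrinking, then $R\le0$ contradicts Lemma~\ref{lem1}$(1)$ at once. If $M$ is steady, then $\rho=0$, so $R\le0=\rho$ and Lemma~\ref{lem1}$(2)$ forces $R=\rho=0$; hence $F''=R-\rho=0$, so $F'$ is constant and $F'''=0$, and feeding these into \eqref{RT3} gives $\bar R=(F')^2R=0$. Substituting $R=0$ and $F'''=0$ into \eqref{RT11} then makes every curvature component vanish, so $M$ is flat---contradicting the non-flatness hypothesis. Thus case $(2)$ cannot occur, only the rotationally symmetric warped product over $\mathbb{S}^2$ remains, and conclusion~I follows.

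For Case~II, suppose $M$ is expanding with $R\le\rho$. In case $(1)$ of Theorem~\ref{Thm of CSZ12} we obtain conclusion~II$(1)$ directly. In case $(2)$, Lemma~\ref{lem1}$(2)$ gives $R=\rho<0$, so $F''=R-\rho=0$ and $|\n F|=F'$ is constant; the warping is therefore trivial and $M$ is the Riemannian product $(\mathbb{R},dr^2)\times(N^2,\bar g)$. From \eqref{RT3} with $F''=F'''=0$ I read off $\bar R=(F')^2R=\rho|\n F|^2<0$, and since $N^2$ is two-dimensional its constant scalar curvature equals twice its constant Gaussian curvature $\tfrac12\bar R=\tfrac12\rho|\n F|^2<0$, so $N^2=\mathbb{H}^2(\tfrac12\rho|\n F|^2)$. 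This is conclusion~II$(2)$, completing the dichotomy.

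The step I expect to be the main obstacle is the steady sub-case of Case~I: one must verify that $R=\rho=0$ together with the warped-product structure genuinely forces flatness, i.e.\ that \emph{all} components in \eqref{RT11} vanish and not merely $\bar R$, and one must confirm that $\Ric(\n F,\n F)\le0$ and the lower Ricci bound are exactly the inputs Lemma~\ref{lem1} needs in case $(2)$. The remaining identifications are routine substitutions into the warped-product curvature formulas.
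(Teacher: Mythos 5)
Your proposal is correct and follows exactly the route the paper intends: the paper gives no separate proof of this proposition, deferring to ``the similar argument as in the proof of Theorem~\ref{main}'' combined with Lemma~\ref{lem1}, and your argument (the dichotomy of Theorem~\ref{Thm of CSZ12}, elimination or rigidification of the no-critical-point case via Lemma~\ref{lem1}, and the curvature formulas \eqref{RT3} and \eqref{RT11} to conclude flatness or identify $N^2$) is precisely that reconstruction. The steady sub-case you flag as the main obstacle does go through as you describe, since $R=\rho=0$ forces $F''=F'''=0$ and hence $\bar R=0$, killing every component in \eqref{RT11}.
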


\begin{proof}[Proof of Lemma $\ref{lem1}$]
$(1)$ $M$ is shrinking: Assume that there exists a Yamabe soliton with $R\leq0$. Set $L=-R\geq0$.
By $(\ref{p.5})$, 
\begin{align*}
\Delta L
=&-\frac{1}{2(n-1)^2}\Ric (\n F,\n F)+\frac{1}{n-1}L(L+\rho)\\
\geq&-\frac{1}{2(n-1)^2}\Ric (\n F,\n F)+\frac{1}{n-1}L^2.
\end{align*} 
By the assumption $\Ric(\n F,\n F)\leq0$, we obtain
$$\Delta L\geq\frac{1}{n-1}L^2.$$
Since $L$ is nonnegative, by Omori-Yau's maximum principle, $L=0.$ Thus, $F''=-\rho$ (as in the proof of Proposition~\ref{prop1}). Hence, $F'(>0)$ is a non-constant linear function, which cannot happen.

$(2)$ $M$ is steady or expanding:
 Set $u=\rho-R$. By $(\ref{p.5})$, 
\begin{align*}\Delta u
=&-\frac{1}{2(n-1)^2}\Ric (\n F,\n F)+\frac{1}{n-1}R(R-\rho)\\
\geq&-\frac{1}{2(n-1)^2}\Ric (\n F,\n F)+\frac{1}{n-1}u^2.
\end{align*} 
By the assumption $\Ric(\n F,\n F)\leq0$, we obtain
$$\Delta u\geq\frac{1}{n-1}u^2.$$
Since $u$ is nonnegative, by Omori-Yau's maximum principle, $u=0,$ that is, $R=\rho.$

\end{proof}

\noindent
{\bf Acknowledgments.}~
The work was done while the author was visiting the Department of Mathematics of Texas A $\&$ M University-Commerce as a Visiting Scholar and he is grateful to the department and the university for the hospitality he had received during the visit.



\bibliographystyle{amsbook}

\end{document}